\newtheorem{thm}{Theorem}
\newtheorem{cor}{Corollary}
\newtheorem{pro}{Proposition}
\newtheorem{rem}{Remark}
\newenvironment{proof}
{\noindent {\em Proof. }} {\hfill $\Box$}
\newcommand{\R}{\mathbb R}
\newcommand{\no}{\noindent}
\author{Kuo-Wei Lee* and Yng-Ing Lee**}
\date{}
\title{Mean Curvature flow in Higher Co-dimension}
\begin{document}
\maketitle \maketitle \leftline{*Department of Mathematics, National
Taiwan University, Taipei, Taiwan} \centerline{email:
d93221007@ntu.edu.tw} \leftline{**Department of Mathematics and
Taida Institute of Mathematical Sciences,} \leftline{\quad  National
Taiwan University, Taipei, Taiwan} \leftline{\quad  National Center
for Theoretical Sciences, Taipei Office}
 \centerline{email: yilee@math.ntu.edu.tw}
 \begin{abstract}
 We make several improvements on the results of M.-T. Wang
 in \cite{W1} and his joint paper with M.-P. Tsui \cite{TW1} concerning
 the long time existence and convergence for solutions of mean curvature flow in
 higher co-dimension. Both the curvature condition and
lower bound of $*\Omega$ are weakened.  New applications are
 also obtained.
  \end{abstract}
  \section{Introduction}
  From the first variation formula of area for a submanifold in
  a Riemannian manifold, we can consider the mean curvature vector as the
  negative gradient of the area functional. The area of the
  submanifold will decrease most rapidly if we deform the
  submanifold in the direction of its mean curvature vector. Such
  a deformation is called mean curvature flow. It is a very nature
  way to find minimal submanifolds, or canonical representatives.
  The study of mean curvature flow/curve shortening flow is very
  active and has much advance in the past thirty years. It started
  from the work of Brakke \cite{Br} and the  paper of
  Huisken \cite{Hu1} opened a new era on the mean curvature flow of
  hypersurface. New developments were obtained in recent years on
  mean curvature flow in higher co-dimension. Since our work mainly
  focuses on generalizing the results in \cite{TW1} and \cite{W1}, we
  do not intend to list all important developments and papers on
  mean curvature flow here.  Please refer to the papers \cite{TW1,W1}
  and the reference therein.

   In this note, we prove the following theorems:

   \no {\bf Theorem \ref{thm1} } {\it
Let $(N_1, g)$ and $( N_2, h)$ be two compact Riemannian
manifolds, and $f$ be a smooth map from
    $N_1$ to $N_2$.
Assume that $K_{N_1}\geq k_1$ and $K_{ N_2}\leq k_2$ for two
constants $k_1$ and $k_2$,
    where $K_{N_1}$ and $K_{ N_2}$ are the sectional curvature of $N_1$ and $N_2$ respectively.
Suppose either $k_1\geq 0, k_2\leq 0$, or $k_1\geq k_2>0$, then
the following results hold:

\begin{itemize}
\item[(i)] If $\frac{\det((g+f^*h)_{ij})}{\det(g_{ij})}<4$, then
the mean curvature flow of the graph of $f$ remains a graph of a
map and exists for all time. \item[(ii)] Furthermore, if $k_1>0$,
then the mean curvature flow converges smoothly to the graph of a
constant map.
\end{itemize}}

 \no {\bf Theorem \ref{thm2} } {\it Assume the same conditions as
 in Theorem \ref{thm1}. Then
the following results hold:
\begin{itemize}
\item[(i)] If $f$ is a smooth area decreasing map from $N_1$ to
$N_2$,
    then the mean curvature flow of the graph of $f$ remains the graph of an area decreasing map,
    and exists for all time.
\item[(ii)] Furthermore, if $k_1>0$, then the mean curvature flow
converges smoothly to the graph of a constant map.
\end{itemize}}

  In Theorem \ref{thm1} and Theorem \ref{thm2} , we generalize the curvature conditions on $N_1$ and $N_2$ of
  the main theorems in \cite{W1} and \cite{TW1} from constant sectional curvature to varied ones.
  Moreover, in Theorem \ref{thm1} the upper bound on
  $ \frac{\det((g+f^*h)_{ij})}{\det(g_{ij})}$ in \cite{W1} is relaxed from $2$
  to $4$, which should also be observed from \cite{TW1}. But
  since it is not mentioned and
  proved there,  for completeness we treat this generalization
as well. We also want to remark that the correct condition (which
is related to $*\Omega$) in \cite{W1} should be
$\frac{\sqrt{\det(g_{ij})}}{\sqrt{\det((g+f^*h)_{ij})}}$, instead
of $\frac1{\sqrt{\det((g+f^*h)_{ij})}}$.

We can apply Theorem \ref{thm2} to  show

\no {\bf Corollary \ref{cor1} }{\it Let $N_1, N_2$ be compact
manifolds and $\mbox{dim }N_1\geq 2$. Suppose that there exist
Riemannian metrics $g_1$ and $g_2$ on $N_1$ and $N_2$  with
sectional curvature $K_{N_1(g_1)}>0$ and $K_{N_2(g_2)}\leq 0$.
Then any map from $N_1$ to $N_2$ must be homotopic to a constant
map.}

\no {\bf Corollary \ref{App} }{\it Let $(N_1,g_1), (N_2,g_2)$ be
compact Riemannian manifolds with $K_{N_1(g_1)}\geq k_1$,
$K_{N_2(g_2)}\leq k_2$, and both $k_1$ and $k_2$ are positive
constants. If the $2$-dilation of $f:(N_1,g_1)\to(N_2,g_2)$ is
less than $\displaystyle\frac{k_1}{k_2}$, then $f$ is homotopic to
a constant map.}

 We made most of the observations in this paper a few years ago and explained
  the arguments to M.-T. Wang and M.-P. Tsui in 2004
  when the second author visited them in Columbia University. We
  thank M.-T. Wang for suggesting us to write up this note. A
  version of Theorem \ref{thm2} in pseudo-Riemannian case is
  obtained recently in \cite{LS}.

To prove Theorem 1 and 2, we  first need to show that the solution
of mean curvature flow remains the graph of a map satisfying the
same constraint as the initial map. This step depends on the
curvature condition. Once we obtain the inequality in the first
step, similar argument as in \cite{W1} shows that the solution
exists for all time. A refined inequality is needed to show that
$*\Omega$ will converge to 1 as $t$ tends to infinity. We also
need the curvature condition in this part. The last step, which is
to show that the limit is a graph of a constant map, is the same
as in \cite{W1}.

We list basic definitions and properties in \S 2 as preliminaries.
Theorem \ref{thm1} is proved in \S 3 and for completeness we also
sketch the argument for the part which is similar to \cite{W1}. In
\S 4, we discuss the area decreasing case and prove Theorem
\ref{thm2}. The applications are given in \S 5.

  \section{Preliminaries}
Assume that $N_1$ and $N_2$ are two compact Riemannian manifolds
with metric $g$ and $h$, and of dimension $n$ and $m$
respectively. Let $f:N_1\to N_2$ be a smooth map and denote  the
graph by $\Sigma$.
    Then $\Sigma$ is an embedded submanifold in the product manifold $M=N_1\times N_2$
    with
    $F=\mbox{id.}\times f:N_1\to M $.

    A smooth family $F_t:N_1\to M$ is called a mean curvature flow of $\Sigma$ if it satisfies
\[\begin{cases}\begin{aligned}&
\left(\frac{\partial F_t(x)}{\partial t}\right)^\bot=H(x,t) \\
&F_0(N_1)=\Sigma
\end{aligned}\end{cases}\]
where $H$ is the mean curvature vector of $F_t(N_1)=\Sigma_t$ and
$(\cdot)^\bot$ denotes the projection onto the normal bundle
    $N\Sigma_t$ of $\Sigma_t$. By standard theories, the flow has
    short time existence.

Let $\Omega$ be a parallel $n$-form on $M$. We can evaluate this
$n$-form on $\Sigma_t$. Choose orthonormal frames $\{e_i\}_{i=1}^n$
for $T\Sigma_t$ and $\{e_\alpha\}_{\alpha=n+1}^{n+m}$ on
$N\Sigma_t$. The following evolution equation for $\Omega$ is
derived by M.-T. Wang :

\begin{pro}\cite{W1}
If $F_t$ is an $n$-dimensional mean curvature flow of $\Sigma$ in
$M$ and $\Omega$ is a parallel $n$-form on $M$. Then
$\Omega_{1\cdots n}=\Omega(e_1,\ldots,e_n)$ satisfies
\begin{align}
\frac{\partial}{\partial t}\Omega_{1\cdots n}
=&\Delta\Omega_{1\cdots n}+\Omega_{1\cdots n}\left(\sum_{\alpha,i,k}(h_{ik}^\alpha)^2\right)\notag \\
-&2\sum_{\alpha<\beta,k}\left(\Omega_{\alpha\beta3\cdots
n}h_{1k}^\alpha h_{2k}^\beta
    +\Omega_{\alpha2\beta\cdots n}h_{1k}^\alpha h_{3k}^\beta+\cdots
    +\Omega_{1\cdots(n-2)\alpha\beta}h_{(n-1)k}^\alpha h_{nk}^\beta\right)\notag \\
-&\sum_{\alpha,k}\left(\Omega_{\alpha2\cdots n}R_{\alpha
kk1}+\cdots
    +\Omega_{1\cdots(n-1)\alpha}R_{\alpha kkn}\right) \label{evoeq0}
\end{align}
where $\Delta$ denotes the time-dependent Laplacian on $\Sigma_t$,
    $h_{ij}^\alpha=\langle\nabla_{e_i}^M e_j,e_\alpha\rangle$ is the second fundamental form,
    and $R$ is the curvature tensor of $M=N_1\times N_2$ with the product metric $g+h$.
\end{pro}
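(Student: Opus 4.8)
The plan is to compute $\frac{\partial}{\partial t}\Omega_{1\cdots n}$ and $\Delta\Omega_{1\cdots n}$ separately at an arbitrary space-time point $(x_0,t_0)$ and then subtract; since \eqref{evoeq0} is a pointwise identity, this suffices. At $(x_0,t_0)$ I would fix adapted orthonormal frames: $\{e_i\}_{i=1}^n$ tangent to $\Sigma_{t_0}$ with $(\nabla^{\Sigma}_{e_i}e_j)(x_0)=0$, and $\{e_\alpha\}_{\alpha=n+1}^{n+m}$ normal with $(\nabla^{\perp}_{e_i}e_\alpha)(x_0)=0$, extending both to a smooth oriented orthonormal frame field near the point. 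Since $\Omega_{1\cdots n}$ does not depend on the chosen oriented orthonormal frame, this extension is harmless; at $x_0$ the time-dependent Laplacian reduces to $\sum_k e_k(e_k(\cdot))$ and all Christoffel symbols of the induced and normal connections vanish.

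For the time derivative, since $\Omega$ is parallel on $M$ only the evolving frame contributes: $\frac{\partial}{\partial t}\Omega_{1\cdots n}=\sum_i\Omega(e_1,\ldots,\nabla^M_{\partial_t}e_i,\ldots,e_n)$. Expanding $\nabla^M_{\partial_t}e_i$ in the full frame, the component along $e_i$ is $\tfrac12\partial_t\langle e_i,e_i\rangle=0$, the components along $e_j$ with $j\neq i$ multiply a form with a repeated vector, and only the normal components $\langle\nabla^M_{\partial_t}e_i,e_\alpha\rangle$ survive. Using the flow equation in its normal parametrization $\partial_tF=H$ together with a time-independent coordinate frame on $N_1$ that is orthonormal at $(x_0,t_0)$ and $[\partial_t,\partial_{x^j}]=0$, one identifies $\langle\nabla^M_{\partial_t}e_i,e_\alpha\rangle=\langle\nabla^M_{e_i}H,e_\alpha\rangle=\nabla_iH^\alpha$, hence $\frac{\partial}{\partial t}\Omega_{1\cdots n}=\sum_{i,\alpha}(\nabla_iH^\alpha)\,\Omega(e_1,\ldots,e_\alpha,\ldots,e_n)$, with $e_\alpha$ in the $i$-th slot.

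For the Laplacian, using $\nabla^M\Omega=0$ and $\nabla^M_{e_k}e_i=\nabla^{\Sigma}_{e_k}e_i+\sum_\alpha h_{ki}^\alpha e_\alpha$, the intrinsic part again drops out (by the same two cancellations, now valid on a whole neighbourhood of $x_0$), so $e_k\Omega_{1\cdots n}=\sum_{i,\alpha}h_{ki}^\alpha\,\Omega(e_1,\ldots,e_\alpha,\ldots,e_n)$ with $e_\alpha$ in slot $i$. Differentiating once more, summing over $k$ and evaluating at $x_0$ splits $\Delta\Omega_{1\cdots n}$ into three families of terms. (a) When the derivative lands on $h_{ki}^\alpha$ one meets $\sum_k\nabla_kh_{ki}^\alpha$, which by the Codazzi equation equals $\nabla_iH^\alpha$ plus ambient-curvature terms $\sum_kR_{\alpha kki}$; the first part reproduces the expression found for $\frac{\partial}{\partial t}\Omega_{1\cdots n}$ and cancels it upon subtraction, while the curvature part gives the last line of \eqref{evoeq0}. (b) When the derivative lands on the $e_\alpha$ already sitting in slot $i$, only the tangential part $-\sum_jh_{kj}^\alpha e_j$ of $\nabla^M_{e_k}e_\alpha$ survives at $x_0$, and of it only the $j=i$ piece survives the alternating property, returning $e_i$ to slot $i$ and producing, after subtraction, the reaction term $\Omega_{1\cdots n}\sum_{\alpha,i,k}(h_{ik}^\alpha)^2$. (c) When the derivative lands on some $e_j$ with $j\neq i$, a second distinct slot is converted to a normal direction, yielding terms $\sum_kh_{ik}^\alpha h_{jk}^\beta\,\Omega(\ldots,e_\alpha,\ldots,e_\beta,\ldots)$ with $e_\alpha,e_\beta$ in slots $i$ and $j$; grouping the two orderings of each slot pair and using the skew-symmetry of $\Omega$ in its two normal entries assembles these into the middle line of \eqref{evoeq0}.

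Putting the three families together and subtracting $\Delta\Omega_{1\cdots n}$ from $\frac{\partial}{\partial t}\Omega_{1\cdots n}$, the $\nabla H$ terms cancel and exactly \eqref{evoeq0} remains. I expect the main obstacle to be the bookkeeping in (a)--(c): invoking the Codazzi equation in the correct sign convention and retaining its ambient-curvature defect (since $M=N_1\times N_2$ is not flat), keeping precise track of how many and which slots of $\Omega$ have been turned from tangential to normal, and using the total skew-symmetry of $\Omega$ to reduce the symmetric index pairs to the stated $\alpha<\beta$ form with the correct combinatorial factor and sign. Everything else follows from the Gauss and Weingarten formulas, the Codazzi equation, and the vanishing of the connection coefficients at $x_0$.
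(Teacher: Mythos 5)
The paper does not prove this proposition; it is quoted verbatim from Wang \cite{W1} (note the citation in the proposition header), so there is no ``paper's own proof'' to compare your argument against. Evaluated on its own terms, your sketch is the standard Bochner-type computation Wang carries out in \cite{W1}, and it is structurally sound: you correctly isolate the three families of terms produced by differentiating $e_k\Omega_{1\cdots n}=\sum_{i,\alpha}h^\alpha_{ki}\,\Omega(e_1,\ldots,e_\alpha,\ldots,e_n)$ a second time, you invoke Codazzi to turn $\sum_k\nabla_k h^\alpha_{ki}$ into $\nabla_i H^\alpha$ plus ambient curvature, and you use the Weingarten relation and the alternating property of $\Omega$ to extract the reaction term $\Omega_{1\cdots n}|A|^2$. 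The identification $\langle\nabla^M_{\partial_t}e_i,e_\alpha\rangle=\nabla^\perp_i H^\alpha$ at the chosen point is handled correctly: the Gram--Schmidt correction relating the evolving orthonormal frame to the coordinate frame is tangential at $(x_0,t_0)$, so it contributes nothing to the normal pairing with $e_\alpha$, and the commutation $[\partial_t,\partial_j]=0$ then gives the stated identity.

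One remark worth making, which is also a sanity check on your family (c): a careful bookkeeping of the double normal-slot terms produces, for each pair $i<j$ and each $\alpha<\beta$, the antisymmetrized combination $h^\alpha_{ik}h^\beta_{jk}-h^\beta_{ik}h^\alpha_{jk}$ multiplying $\Omega_{1\cdots\alpha\cdots\beta\cdots n}$, which is exactly what you obtain by ``grouping the two orderings of each slot pair and using the skew-symmetry of $\Omega$.'' That this is the correct form (rather than the bare $h^\alpha_{ik}h^\beta_{jk}$ suggested by a too-literal reading of the displayed sum over $\alpha<\beta$) is confirmed by the paper's proof of the next proposition, where the identity
\begin{equation*}
\Omega_{1\cdots\alpha\cdots\beta\cdots n}h^\alpha_{ik}h^\beta_{jk}=*\Omega\,\lambda_i\lambda_j\left(h^{n+i}_{ik}h^{n+j}_{jk}-h^{n+i}_{jk}h^{n+j}_{ik}\right)
\end{equation*}
is used with an unrestricted sum over $\alpha,\beta$. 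So your derivation is consistent with how the formula is subsequently applied. The only real gap in the write-up is the one you flag yourself: the sign of the curvature defect in Codazzi must be matched to the paper's nonstandard convention $R(X,Y)Z=-\nabla_X\nabla_YZ+\nabla_Y\nabla_XZ+\nabla_{[X,Y]}Z$, $R_{ijkl}=\langle R(e_k,e_l)e_i,e_j\rangle$, before one can assert that the resulting term is $-\sum_{\alpha,k}\Omega_{\ldots\alpha\ldots}R_{\alpha kki}$ with the displayed sign. If you carry that convention through the Codazzi identity $(\nabla_{e_k}h)(e_i,e_j)-(\nabla_{e_i}h)(e_k,e_j)=(R^M(e_k,e_i)e_j)^\perp$ explicitly, the argument is complete.
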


\begin{rem}
Here we use the same convention as in \cite{W1} that
\begin{align*}
&R(X,Y)Z=-\nabla_X\nabla_YZ+\nabla_Y\nabla_XZ+\nabla_{[X,Y]}Z \\
&R_{ijkl}=\langle R(e_k,e_l)e_i,e_j\rangle
\end{align*}
and the sectional curvature is $K(e_k,e_i)=\langle
R(e_k,e_i)e_k,e_i\rangle$, where $\{e_i\}$ are orthonormal.
\end{rem}

Since $M=N_1\times N_2$ is a product manifold, the volume form
$\Omega_1$ of
    $N_1$ can be extended as a parallel $n$-form on $M$.
At any point $p$ on $\Sigma_t$, we have
$*\Omega=\Omega_1(e_1,\ldots,e_n)=\Omega_1(\pi_1(e_1),\ldots,\pi_1(e_n))$,
which
    is the Jacobian of the projection from $T_p\Sigma_t$ to $T_{\pi_1(p)}N_1$.
By the implicit function theorem, we know $*\Omega>0$ near $p$ if
and only if $\Sigma_t$
    is locally a graph over $N_1$ near $p$.

When $\Sigma_t$ is the graph of $f_t:N_1\to N_2$, by the singular
value decomposition theorem,
    there exist an orthonormal basis $\{a_i\}_{i=1}^n$ for
    $T_{\pi_1(p)}N_1$ and $\{a_\alpha\}_{\alpha=n+1}^{n+m}$
for $T_{\pi_2(p)} N_2$ so that $df_t(a_i)=\lambda_ia_{n+i}$ for
$1\leq i\leq r$,
    and $df_t(a_i)=0$ for $r\leq i\leq n$.
Note that $r\leq\min(n,m)$ is the rank of $df_t$ at $p$, and
$\lambda_i$s' are the eigenvalues of
    $\sqrt{(df_t)^Tdf_t}$.
    Hence $\lambda_i\geq 0$ for all $i=1,\ldots,n$.
We can use $\{a_i\}_{i=1}^n$ and $\{a_\alpha\}_{\alpha=n+1}^{n+m}$
to construct special orthonormal bases
    $\{E_i\}_{i=1}^n$ on $T_p\Sigma_t$ and $\{E_\alpha\}_{\alpha=n+1}^{n+m}$
    on $N_p\Sigma_t$ as follows:
\begin{align}
E_i=&\left\{
\begin{array}{ll}
    \frac1{\sqrt{1+\lambda_i^2}}(a_i+\lambda_ia_{n+i}) & \mbox{ \ if \ } 1\leq i\leq r  \\
    a_i                                                & \mbox{ \ if \ } r+1\leq i\leq n
\end{array}\right. \label{onf1}\\
E_{n+q}=&\left\{
\begin{array}{ll}
    \frac1{\sqrt{1+\lambda_q^2}}(a_{n+q}-\lambda_qa_q) & \mbox{ if \ } 1\leq q\leq r    \\
    a_{n+q}                                            & \mbox{ if \ } r+1\leq q\leq m
\end{array}\right. \label{onf2}
\end{align}
Thus,
\begin{align*}
*\Omega=\Omega_1(\pi_1(E_1),\ldots,\pi_1(E_n))=\frac1{\sqrt{\prod_{i=1}^n(1+\lambda_i^2)}}.
\end{align*}

With these new bases (\ref{onf1}) and (\ref{onf2}), we can rewrite
(\ref{evoeq0}) as follows. This evolution equation is derived in
\cite{W1} and here we express the formula in a general form.

\begin{pro}\cite{W1}
Suppose $M=N_1\times N_2$ with the product metric $g+h$ and
$\Omega$ is the parallel extension of the volume form of $N_1$.
Let $\Sigma$ be an embedded submanifold in $M$ and be a graph over
$N_1$. If the mean curvature flow of $\Sigma$ is a graph over
$N_1$,
    then $*\Omega$ satisfies the following equation:
\begin{align}
&\frac{\partial}{\partial t}*\!\Omega
=\Delta*\!\Omega+*\Omega|A|^2+*\Omega
    \left\{2\sum_{k,i<j}\lambda_i\lambda_jh_{ik}^{n+j}h_{jk}^{n+i}
    -2\sum_{k,i<j}\lambda_i\lambda_jh_{ik}^{n+i}h_{jk}^{n+j}\right\}\notag\\
&+*\Omega\sum_{i,k}
    \left(\frac{\lambda_i^2}{(1+\lambda_i^2)(1+\lambda_k^2)}\langle R_1(a_k,a_i)a_k,a_i\rangle
    -\frac{\lambda_i^2\lambda_k^2}{(1+\lambda_i^2)(1+\lambda_k^2)}
    \langle R_2(a_{n+k},a_{n+i})a_{n+k},a_{n+i}\rangle\right) \label{evoeq1}
\end{align}
where $|A|^2$ denotes the norm square of the second fundamental
form, and
    $R_1, R_2$ denote the curvature tensors on $N_1,  N_2$ with metric $g, h$ respectively.
\end{pro}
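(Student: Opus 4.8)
The plan is to derive (\ref{evoeq1}) from (\ref{evoeq0}) by substituting the special orthonormal bases $\{E_i\}$, $\{E_\alpha\}$ of (\ref{onf1})--(\ref{onf2}) at a fixed point $p\in\Sigma_t$, where $\Sigma_t$ is locally a graph. Since both sides of (\ref{evoeq0}) are tensorial in the choice of adapted frame, we are free to evaluate at $p$ using the singular-value-decomposition frame. First I would compute $\Omega_{1\cdots n}=*\Omega=\prod_{i=1}^n(1+\lambda_i^2)^{-1/2}$ and, more importantly, the components $\Omega_{\alpha\cdots\beta\cdots}$ appearing in the quadratic term of (\ref{evoeq0}): one plugs $E_{n+q}=\frac{1}{\sqrt{1+\lambda_q^2}}(a_{n+q}-\lambda_q a_q)$ into the $n$-form $\Omega_1$ (the extension of the volume form of $N_1$), using that $\Omega_1$ only sees the $N_1$-directions $a_1,\dots,a_n$. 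Replacing $E_i$ by $E_\alpha$ in the $i$-th slot introduces a factor $-\lambda_i/\sqrt{1+\lambda_i^2}$ times the surviving volume factor, so e.g. $\Omega_{\alpha 2\cdots n}$ with $\alpha=n+i$ equals $(-\lambda_i)\,*\Omega$ when the normal index matches the tangential slot it replaces, and $0$ otherwise (since two $a_{n+i}$'s or a missing $a_i$ kill the $N_1$-volume form). This is the bookkeeping heart of the computation.

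Next I would feed these values into the second line of (\ref{evoeq0}). The generic term $\Omega_{\alpha\beta3\cdots n}h_{1k}^\alpha h_{2k}^\beta + \cdots$ collapses: the only nonvanishing $\Omega$-components force $\alpha=n+i$, $\beta=n+j$ where $i,j$ are exactly the two tangential slots that were replaced, producing a factor $\lambda_i\lambda_j(*\Omega)$ (the two minus signs multiply to $+$). Summing over $\alpha<\beta$ and over which pair of slots is replaced, and carefully tracking which second-fundamental-form components $h_{ik}^{n+j}$ appear, yields the bracket $2\sum_{k,i<j}\lambda_i\lambda_j h_{ik}^{n+j}h_{jk}^{n+i} - 2\sum_{k,i<j}\lambda_i\lambda_j h_{ik}^{n+i}h_{jk}^{n+j}$; the sign split between the "off-diagonal'' and "diagonal'' products comes from the antisymmetrization in the wedge and the overall minus sign in front of that line of (\ref{evoeq0}). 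I would also note $\sum_{\alpha,i,k}(h_{ik}^\alpha)^2 = |A|^2$, giving the $*\Omega|A|^2$ term directly.

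For the curvature line of (\ref{evoeq0}), I would again use that $\Omega_{1\cdots(j-1)\alpha(j+1)\cdots n}=(-\lambda_j)\,*\Omega$ exactly when $\alpha=n+j$ and is $0$ otherwise, so $-\sum_{\alpha,k}\Omega_{1\cdots\alpha\cdots n}R_{\alpha kkj}$ becomes $\sum_{j,k}\lambda_j(*\Omega)R_{(n+j)\,k\,k\,j}$. Here the curvature tensor $R$ of the product $M=N_1\times N_2$ must be expressed in terms of $R_1$ and $R_2$: for the product metric, $R$ splits, and the mixed component $R_{(n+j)\,k\,k\,j}=\langle R(e_k,e_j)e_k,e_{n+j}\rangle$ must be re-expanded using $e_k=E_k=\frac{1}{\sqrt{1+\lambda_k^2}}(a_k+\lambda_k a_{n+k})$ etc. Because $R$ annihilates any argument that mixes the two factors unless both the "input pair'' and "output pair'' lie in the same factor, only the $a$-component combinations survive, and one gets terms $\frac{\lambda_j^2}{(1+\lambda_j^2)(1+\lambda_k^2)}\langle R_1(a_k,a_j)a_k,a_j\rangle$ from the $N_1$-part and $-\frac{\lambda_j^2\lambda_k^2}{(1+\lambda_j^2)(1+\lambda_k^2)}\langle R_2(a_{n+k},a_{n+j})a_{n+k},a_{n+j}\rangle$ from the $N_2$-part (the relative minus sign is a consequence of the index positions in $R_{\alpha kkj}$ versus the sectional-curvature normalization in the Remark). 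Relabeling $j\to i$ gives the last sum in (\ref{evoeq1}).

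The main obstacle I expect is the combinatorial sign/index bookkeeping in the quadratic term: keeping straight, for each pair of replaced slots $i<j$, which of $h_{ik}^{n+i},h_{ik}^{n+j},h_{jk}^{n+i},h_{jk}^{n+j}$ multiplies $\lambda_i\lambda_j(*\Omega)$ with which sign, and checking that the many "cross'' terms with a mismatched normal index (which would give $h_{ik}^{n+\ell}$ for $\ell\neq i,j$) indeed vanish because the corresponding $\Omega$-component is zero. A secondary, milder subtlety is verifying that at $p$ one may choose the frame $\{E_i\}$ so that all the ambient Christoffel contributions not already accounted for drop out, i.e.\ that (\ref{evoeq0}) is genuinely frame-independent at a point so that the substitution is legitimate; this is handled exactly as in \cite{W1}. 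Once these are in hand, collecting the three groups of terms reproduces (\ref{evoeq1}).
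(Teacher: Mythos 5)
Your proposal follows exactly the paper's route: evaluate the components $\Omega_{1\cdots\alpha\cdots n}$, $\Omega_{1\cdots\alpha\cdots\beta\cdots n}$, and $R_{(n+i)kki}$ in the singular-value bases (\ref{onf1})--(\ref{onf2}), note that most components vanish, and substitute into (\ref{evoeq0}); the paper simply records the three key identities that result and states that (\ref{evoeq1}) ``follows directly.'' Your write-up fills in the same bookkeeping (including the signs from the permutation of slots and from the $(-\lambda_i)$ factors) that the paper leaves implicit, so it is essentially the same proof.
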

\begin{proof}
From the evolution equation (\ref{evoeq0}) and bases (\ref{onf1}),
(\ref{onf2}), one has
\begin{align*}
&\Omega_{1\cdots\alpha\cdots\beta\cdots n}h_{ik}^\alpha
h_{jk}^\beta
    =*\Omega\lambda_i\lambda_j\left(h_{ik}^{n+i}h_{jk}^{n+j}-h_{jk}^{n+i}h_{ik}^{n+j}\right) \\
&\Omega_{1\cdots\alpha\cdots n}=-*\!\Omega\lambda_i \\
&R_{(n+i)kki}=\frac{-\lambda_i\lambda_k^2}{\left(1+\lambda_i^2\right)\left(1+\lambda_k^2\right)}
    \langle R_2(a_{n+k},a_{n+i})a_{n+i},a_{n+k}\rangle
    +\frac{\lambda_i}{\left(1+\lambda_i^2\right)\left(1+\lambda_k^2\right)}\langle R_1(a_k,a_i)a_i,a_k\rangle
\end{align*}
The evolution equation (\ref{evoeq1}) thus follows directly.
\end{proof}

When $*\Omega>0$, one can consider the evolution equation of
$\ln*\Omega$ instead and have the following:

\begin{pro}\cite{TW1} The evolution equation (\ref{evoeq1}) can be rewritten as the form:
\begin{align}
&\frac{\partial}{\partial t}\ln *\Omega =\Delta\ln
*\Omega+|A|^2+\sum_{i,k}\lambda_i^2\left(h_{ik}^{n+i}\right)^2
    +2\sum_{k,i<j}\lambda_i\lambda_jh_{ik}^{n+j}h_{jk}^{n+i}\notag \\
&+\sum_{i,k}\left(\frac{\lambda_i^2}{\left(1+\lambda_i^2\right)\left(1+\lambda_k^2\right)}
    \langle R_1(a_k,a_i)a_k,a_i\rangle-\frac{\lambda_i^2\lambda_k^2}{\left(1+\lambda_i^2\right)
    \left(1+\lambda_k^2\right)}\langle R_2(a_{n+k}, a_{n+i})a_{n+k}, a_{n+i}\rangle\right) \label{evoeq2}
\end{align}
\end{pro}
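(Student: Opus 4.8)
The plan is to derive (\ref{evoeq2}) straight from (\ref{evoeq1}) using the elementary identities relating the heat operator applied to a positive function and to its logarithm. Since we are assuming the flow stays a graph we have $*\Omega>0$, so $\ln*\Omega$ is smooth and
\[
\frac{\partial}{\partial t}\ln*\Omega=\frac{1}{*\Omega}\frac{\partial}{\partial t}*\Omega,\qquad
\Delta\ln*\Omega=\frac{1}{*\Omega}\Delta*\Omega-\frac{|\nabla*\Omega|^2}{(*\Omega)^2}.
\]
Dividing (\ref{evoeq1}) by $*\Omega$ and substituting these relations converts the equation for $*\Omega$ into an equation for $\ln*\Omega$, producing the extra term $|\nabla\ln*\Omega|^2=|\nabla*\Omega|^2/(*\Omega)^2$ on the right-hand side. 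Everything then comes down to evaluating $|\nabla\ln*\Omega|^2$ in terms of the second fundamental form and observing that it recombines with the quadratic terms already present in (\ref{evoeq1}).

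For the first-derivative computation I would work at a fixed point $p\in\Sigma_t$ with the special orthonormal frames (\ref{onf1})–(\ref{onf2}) and normal coordinates on $\Sigma_t$ at $p$. Since $\Omega$ is parallel on $M$ and the tangential part of $\nabla^M e_j$ vanishes at $p$, the same bookkeeping that yields the second-order term in (\ref{evoeq0}) gives $\nabla_{e_k}\Omega_{1\cdots n}=\sum_{i,\alpha}\Omega_{1\cdots\alpha\cdots n}h_{ik}^\alpha$, with $\alpha$ in the $i$-th slot; with the chosen frame the only nonzero contributions are $\alpha=n+i$, for which $\Omega_{1\cdots(n+i)\cdots n}=-*\Omega\,\lambda_i$ (exactly the relation recorded in the proof of (\ref{evoeq1})). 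Hence $\nabla_{e_k}\ln*\Omega=-\sum_i\lambda_i h_{ik}^{n+i}$, and squaring and summing over $k$,
\[
|\nabla\ln*\Omega|^2=\sum_{i,k}\lambda_i^2\left(h_{ik}^{n+i}\right)^2+2\sum_{k,\,i<j}\lambda_i\lambda_j h_{ik}^{n+i}h_{jk}^{n+j}.
\]

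To finish I would substitute this into the rewritten equation. The term $2\sum_{k,i<j}\lambda_i\lambda_j h_{ik}^{n+i}h_{jk}^{n+j}$ coming from $|\nabla\ln*\Omega|^2$ cancels the term $-2\sum_{k,i<j}\lambda_i\lambda_j h_{ik}^{n+i}h_{jk}^{n+j}$ already present in (\ref{evoeq1}); the remaining piece $\sum_{i,k}\lambda_i^2(h_{ik}^{n+i})^2$ is new, and together with the surviving $2\sum_{k,i<j}\lambda_i\lambda_j h_{ik}^{n+j}h_{jk}^{n+i}$, the unchanged $|A|^2$, and the unchanged curvature terms this is precisely (\ref{evoeq2}). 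The only genuinely delicate step is the frame computation of $\nabla_{e_k}\Omega_{1\cdots n}$: the basis (\ref{onf1})–(\ref{onf2}) is defined only at $p$, so one must keep the contribution of the variation of the frame separate from that of $\Omega$ — but this is the same subtlety handled in the derivation of (\ref{evoeq0}) and (\ref{evoeq1}) in \cite{W1}, and the computation follows the one in \cite{TW1}.
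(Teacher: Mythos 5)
Your proposal is correct and follows essentially the same route as the paper: convert the heat operator applied to $*\Omega$ into one applied to $\ln*\Omega$ via $\Delta\ln*\Omega=\frac{\Delta*\Omega}{*\Omega}-|\nabla\ln*\Omega|^2$, compute $\nabla_{e_k}\ln*\Omega=-\sum_i\lambda_i h_{ik}^{n+i}$ from the parallelism of $\Omega$ and the relation $\Omega_{1\cdots(n+i)\cdots n}=-*\Omega\,\lambda_i$, and observe the cancellation of $2\sum_{k,i<j}\lambda_i\lambda_jh_{ik}^{n+i}h_{jk}^{n+j}$. The paper's proof is terser but relies on exactly the same substitutions and the same frame identity.
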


\begin{proof}
Since $\frac{\partial}{\partial
t}\ln*\Omega=\frac1{*\Omega}\left(\frac{\partial}{\partial
t}*\!\Omega\right)$,
   it implies $\frac{\partial}{\partial t}*\Omega=*\Omega\left(\frac{\partial}{\partial t}\ln*\Omega\right)$.
Similarly, one has
\begin{align*}
\Delta\ln*\Omega
=\frac{\Delta*\!\Omega}{*\Omega}-\frac{|\nabla*\!\Omega|^2}{|*\!\Omega|^2}
=\frac{\Delta*\!\Omega}{*\Omega}-\frac{|\Omega_{1\cdots
n,k}|^2}{|*\!\Omega|^2}
=\frac{\Delta*\!\Omega}{*\Omega}-\left|\sum_{i,k}\lambda_ih_{ik}^{n+i}\right|^2
\end{align*}
or
\begin{align*}
\Delta*\!\Omega=*\Omega(\Delta\ln*\Omega)+*\Omega\left(\sum_{i,k}\lambda_ih_{ik}^{n+i}\right)^2.
\end{align*}
Plugging these expressions into equation (\ref{evoeq1}) and
dividing $*\Omega$ on both sides,
   the equation (\ref{evoeq2}) is  then obtained.
\end{proof}
\section{Proof of Theorem 1}\label{section3}
Now we are ready to prove
\begin{thm} \label{thm1}
Let $(N_1, g)$ and $( N_2, h)$ be two compact Riemannian
manifolds, and $f$ be a smooth map from
    $N_1$ to $N_2$.
Assume that $K_{N_1}\geq k_1$ and $K_{ N_2}\leq k_2$ for two
constants $k_1$ and $k_2$,
    where $K_{N_1}$ and $K_{ N_2}$ are the sectional curvature of $N_1$ and $N_2$ respectively.
Suppose either $k_1\geq 0, k_2\leq 0$, or $k_1\geq k_2>0$, then
the following results hold:

\begin{itemize}
\item[(i)] If $\frac{\det((g+f^*h)_{ij})}{\det(g_{ij})}<4$, then
the mean curvature flow of the graph of $f$ remains a graph of a
map and exists for all time. \item[(ii)] Furthermore, if $k_1>0$,
then the mean curvature flow converges smoothly to the graph of a
constant map.
\end{itemize}
\end{thm}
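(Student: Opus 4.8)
The plan is to run the mean curvature flow $F_t$ with $F_0 = F = \mathrm{id} \times f$ and track the quantity $*\Omega$ along the flow, where $\Omega$ is the parallel extension of the volume form of $N_1$. The condition $\frac{\det((g+f^*h)_{ij})}{\det(g_{ij})} < 4$ translates, via the singular value decomposition in \S 2, into $\prod_{i=1}^n (1+\lambda_i^2) < 4$ at $t=0$, hence $*\Omega = \left(\prod_{i=1}^n (1+\lambda_i^2)\right)^{-1/2} > \frac{1}{2}$ initially. First I would show this lower bound $*\Omega > \frac{1}{2}$ is preserved: so long as $*\Omega > 0$ the flow stays a graph, and one may use the evolution equation (\ref{evoeq2}) for $\ln *\Omega$. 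The reaction terms split into a second-fundamental-form part $|A|^2 + \sum_{i,k}\lambda_i^2 (h_{ik}^{n+i})^2 + 2\sum_{k,i<j}\lambda_i\lambda_j h_{ik}^{n+j} h_{jk}^{n+i}$ and a curvature part. For the second fundamental form part, the standard trick (as in \cite{W1}) is to complete squares; when each $\lambda_i \le 1$ — which is where $\prod(1+\lambda_i^2) < 4$ and in particular $*\Omega > \tfrac12$ is used — this whole block is nonnegative. For the curvature part, $\langle R_1(a_k,a_i)a_k,a_i\rangle \ge k_1(1+\lambda_i^2)(1+\lambda_k^2)$-type estimates are false in general, so instead one bounds $\langle R_1(a_k,a_i)a_k,a_i\rangle \ge k_1$ and $\langle R_2(a_{n+k},a_{n+i})a_{n+k},a_{n+i}\rangle \le k_2$ directly (these are sectional curvature bounds on \emph{unit} vectors $a_i, a_{n+i}$), and then checks that the resulting sum over $i,k$ of $\frac{\lambda_i^2 k_1 - \lambda_i^2\lambda_k^2 k_2}{(1+\lambda_i^2)(1+\lambda_k^2)}$ is $\ge 0$ under the hypothesis $k_1\ge 0, k_2 \le 0$ or $k_1 \ge k_2 > 0$ (in the latter case one needs $\lambda_k^2 k_2 \le k_2 \le k_1$ when $\lambda_k \le 1$, i.e. again the smallness of the $\lambda_i$). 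Concluding $\frac{\partial}{\partial t}\ln *\Omega \ge \Delta \ln *\Omega$, the maximum principle gives $\min_{\Sigma_t} *\Omega$ nondecreasing, hence $*\Omega > \tfrac12$ and $\lambda_i < \sqrt{3}$ — actually one wants to bootstrap back to $\prod(1+\lambda_i^2)<4$ being preserved, i.e. $*\Omega > \tfrac12$ is exactly the preserved condition — for all $t$, so $\Sigma_t$ stays a graph.

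The main obstacle, and the point requiring the most care, is the interplay between the two reaction blocks: the nonnegativity of the second-fundamental-form quadratic form genuinely needs $\lambda_i \le 1$ (or at least a quantitative smallness), while a priori the flow only hands us $*\Omega > \tfrac12$, which forces $\prod(1+\lambda_i^2) < 4$ but not each $\lambda_i \le 1$ separately — a single $\lambda_i$ could in principle be close to $\sqrt{3}$. I would handle this by working with the sharp algebraic inequality: under $\prod_j(1+\lambda_j^2) < 4$ one shows the combined reaction term (second fundamental form plus curvature) is still nonnegative, exploiting that if one $\lambda_i$ is large the others are correspondingly small. This is presumably the ``refined inequality'' alluded to in the introduction and where the improvement from the bound $2$ to the bound $4$ comes from; it is essentially a pointwise inequality in the $\lambda_i$'s and the $h_{ij}^\alpha$'s that must be verified by a careful completion of squares, treating the cases in the curvature hypothesis separately.

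Once the graphical condition $*\Omega > \tfrac12$ is preserved, part (i)'s long-time existence follows as in \cite{W1}: the preserved bound on $*\Omega$ gives a uniform bound on $|df_t|$, hence on the geometry of $\Sigma_t$ relative to the fixed compact background $M$; then White's regularity theorem (or the argument in \cite{W1}) rules out singularities — a singularity would force $*\Omega \to 0$, contradicting the bound — so the flow exists for all $t \in [0,\infty)$. For part (ii), assuming $k_1 > 0$, I would extract from the evolution equation (\ref{evoeq2}) a strictly positive lower bound on the curvature reaction term away from the locus where all $\lambda_i = 0$: concretely $\frac{\partial}{\partial t}\ln *\Omega \ge \Delta \ln *\Omega + c\,(1 - *\Omega)$ or $\ge \Delta \ln *\Omega + c\sum_i \frac{\lambda_i^2}{1+\lambda_i^2}$ for some $c = c(k_1, k_2) > 0$, using $k_1 > 0$ to make the $N_1$-curvature term strictly dominate. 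Integrating this differential inequality (comparing with the ODE $\dot\phi = c(1-\phi)$) forces $\min_{\Sigma_t} *\Omega \to 1$, hence $df_t \to 0$ uniformly, i.e. $\Sigma_t$ converges to a horizontal slice $N_1 \times \{pt\}$. The smoothness and the identification of the limit as a constant map then follow exactly as in \cite{W1} — once $|df_t|$ decays one gets uniform higher-order estimates along the flow, subsequential smooth convergence to a minimal graph which must be totally geodesic and horizontal, and then the convergence upgrades to full (not just subsequential) convergence because $*\Omega$ is monotone. I do not expect this last stage to present new difficulties beyond what is already in \cite{W1}.
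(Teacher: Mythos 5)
Your overall outline matches the paper's strategy (evolution of $\ln*\Omega$, preservation of $*\Omega>\tfrac12$ via maximum principle, $k_1>0$ forcing $*\Omega\to1$, then decay of $|A|^2$), but two points you flag as ``the main obstacle'' or leave vague are exactly where the paper has a clean argument that your plan misses, and they are not optional.

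First, you worry that $\prod_j(1+\lambda_j^2)<4$ does not give $\lambda_i\le1$ for each $i$ — true — and you propose handling this by an unspecified ``sharp algebraic inequality.'' But individual smallness is never needed: every reaction term in both I and II involves only a \emph{pairwise} product $\lambda_i\lambda_j$ with $i\neq j$. And $\prod_j(1+\lambda_j^2)\le 4-\varepsilon$ immediately gives, for any $i\neq j$, $(1+\lambda_i^2)(1+\lambda_j^2)\le 4-\varepsilon$, hence $(1+\lambda_i\lambda_j)^2\le(1+\lambda_i^2)(1+\lambda_j^2)\le 4-\varepsilon$ by AM--GM, i.e. $|\lambda_i\lambda_j|\le 1-\delta$ with $\delta=\varepsilon/8$. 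With this, I is bounded below by $\delta|A|^2+(1-\delta)\sum_{k,i<j}(|h_{jk}^{n+i}|-|h_{ik}^{n+j}|)^2\ge\delta|A|^2$, and in the case $k_1\ge k_2>0$ one writes $\lambda_i^2+\lambda_k^2-2\lambda_i^2\lambda_k^2=(\lambda_i-\lambda_k)^2+2\lambda_i\lambda_k(1-\lambda_i\lambda_k)\ge0$ using only $\lambda_i\lambda_k<1$. Your alternative route via $\lambda_k^2 k_2\le k_2$ genuinely requires $\lambda_k\le1$ and would fail. This pairwise-product observation is the actual mechanism behind the improvement from $2$ to $4$, not a ``refined inequality'' to be discovered; the refined inequality mentioned in the introduction is the separate estimate $\mathrm{II}\ge -2c_0\ln*\Omega$ used in (ii).

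Second, you conclude only $\partial_t\ln*\Omega\ge\Delta\ln*\Omega$, but the long-time existence argument (Huisken's backward heat kernel, parabolic blow-up, White's $\varepsilon$-regularity) needs the strictly stronger $\partial_t\ln*\Omega\ge\Delta\ln*\Omega+\delta|A|^2$ with $\delta>0$. It is that $\delta|A|^2$ term that makes $\int(1-u)\rho_{y_0,t_0}$ almost monotone and forces $\int|A|^2\rho\,d\mu$ to go to zero along a blow-up sequence, so that the limit is a plane and the Gaussian density is $1$. Your heuristic ``a singularity would force $*\Omega\to0$'' is not correct — $*\Omega$ stays bounded below by hypothesis; what blows up is $|A|$, and the purpose of the $\delta|A|^2$ term is precisely to rule that out via the monotonicity argument. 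As written, your estimate is too weak to run this step. The paper explicitly notes that ``the only thing needed in the proof is equation (\ref{keyeq}),'' i.e.\ the inequality with the $\delta|A|^2$ term, so you should keep it.
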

{\it Proof of (i): }
 For convenience, we write equation
(\ref{evoeq2}) as
    \begin{equation}\label{evoeq3}\frac{\partial}{\partial t}\ln*\Omega=\Delta\ln*\Omega+\mbox{I}+\mbox{II},\end{equation}
     where
\begin{align*}
\mbox{I}
=&\mbox{ second fundamental form terms}\\
=&|A|^2+\sum_{i,k}\lambda_i^2\left(h_{ik}^{n+i}\right)^2+2\sum_{k,i<j}\lambda_i\lambda_jh_{ik}^{n+j}h_{jk}^{n+i} \\
\mbox{II}
=&\mbox{ curvature tensor terms} \\
=&\sum_{i,k}\left(\frac{\lambda_i^2}{\left(1+\lambda_i^2\right)
    \left(1+\lambda_k^2\right)}\langle R_1(a_k,a_i)a_k,a_i\rangle
    -\frac{\lambda_i^2\lambda_k^2}{\left(1+\lambda_i^2\right)\left(1+\lambda_k^2\right)}
    \langle R_2(a_{n+k}, a_{n+i})a_{n+k}, a_{n+i}\rangle\right) \\
=&\sum_{i,k\ne
i}\left(\frac{\lambda_i^2}{\left(1+\lambda_i^2\right)\left(1+\lambda_k^2\right)}K_{N_1}(a_k,a_i)
    -\frac{\lambda_i^2\lambda_k^2}{\left(1+\lambda_i^2\right)
    \left(1+\lambda_k^2\right)}K_{ N_2}(a_{n+k}, a_{n+i})\right)
\end{align*}
If we can show there exists $\delta>0$ such that
\begin{equation}\label{keyeq}\frac{\partial}{\partial
t}\ln*\Omega\geq\Delta\ln*\Omega+\delta|A|^2, \end{equation}
    by the maximum principle (the minimum version), $\min_{\Sigma_t}\ln*\Omega$ is nondecreasing in $t$,
    and $*\Omega\geq \min_{\Sigma_{t=0}}*\Omega>0$.
Thus $\Sigma_t$ remains the graph of a map $f_t:N_1\to N_2$
    whenever the flow exists. Moreover,
    since
\begin{equation}\label{Ombd}*\Omega=\frac{\sqrt{\det(g_{ij})}}{\sqrt{\det((g+f^*h)_{ij})}}
    =\frac1{\sqrt{\prod_{i=1}^n\left(1+\lambda_i^2\right)}},\end{equation}
    we have $\min_{\Sigma_{t=0}}*\Omega>\frac{1}{2}$, and thus $\min_{\Sigma_{t}}*\Omega>\frac{1}{2}$
    along the flow as well.

So we first  aim at proving equation (\ref{keyeq}). From
(\ref{Ombd})
  and the compactness of $N_1$, it follows that $\prod_{i=1}^n\left(1+\lambda_i^2\right)\leq 4-\varepsilon$
   on $\Sigma_{t=0}$ for some $\varepsilon>0$.  By continuity and the short time existence of the flow,
   the solution remains the graph of a map and satisfies
   $\prod_{i=1}^n\left(1+\lambda_i^2\right)\leq
   4-\frac{\varepsilon}{2}$ for small $t$.

In particular, when $i\neq j$,
$\left(1+\lambda_i^2\right)(1+\lambda_j^2)\leq
4-\frac{\varepsilon}{2}$. By mean inequality, we have
$|\lambda_i\lambda_j|\leq 1-\delta$ for
    $\delta=\frac\varepsilon8>0, i\ne j$.
Thus
\begin{align}
\mbox{I} \geq&\delta|A|^2+(1-\delta)\sum_{i,j,k}
    \left(h_{jk}^{n+i}\right)^2-2(1-\delta)\sum_{k,i<j}\left|h_{jk}^{n+i}h_{ik}^{n+j}\right| \notag \\
\geq&\delta|A|^2+(1-\delta)\sum_{k,i<j}\left(\left|h_{jk}^{n+i}\right|-\left|h_{ik}^{n+j}\right|\right)^2 \notag \\
\geq&\delta|A|^2 \label{ineqa}
\end{align}
For curvature tensor terms,
\begin{itemize}
\item[(a)] \underline{If $k_1\geq 0, k_2\leq 0$}, we have
\begin{align*}
\mbox{II}\geq
 \sum_{i,k\ne i}\left(\frac{\lambda_i^2}{\left(1+\lambda_i^2\right)\left(1+\lambda_k^2\right)}k_1
 -\frac{\lambda_i^2\lambda_k^2}{\left(1+\lambda_i^2\right)\left(1+\lambda_k^2\right)}k_2\right)\geq 0
\end{align*}
\item[(b)] \underline{If $k_1\geq k_2>0$}, then
\begin{align*}
\mbox{II} \geq&\sum_{i,k\ne
i}\left(\frac{\lambda_i^2}{\left(1+\lambda_i^2\right)\left(1+\lambda_k^2\right)}k_1
    -\frac{\lambda_i^2\lambda_k^2}{\left(1+\lambda_i^2\right)\left(1+\lambda_k^2\right)}k_2\right)\\
\geq&\sum_{i,k\ne
i}\left(\frac{\lambda_i^2-\lambda_i^2\lambda_k^2}{\left(1+\lambda_i^2\right)
    \left(1+\lambda_k^2\right)}\right)k_2
    =\sum_{i<k}\left(\frac{\lambda_i^2+\lambda_k^2-2\lambda_i^2\lambda_k^2}{\left(1+\lambda_i^2\right)
    \left(1+\lambda_k^2\right)}\right)k_2
\end{align*}
Since $|\lambda_i\lambda_k|<1$,
\begin{align*}
\lambda_i^2+\lambda_k^2-2\lambda_i^2\lambda_k^2
=(\lambda_i-\lambda_k)^2+2\lambda_i\lambda_k-2\lambda_i^2\lambda_k^2
=(\lambda_i-\lambda_k)^2+2\lambda_i\lambda_k(1-\lambda_i\lambda_k)\geq
0
\end{align*}
\end{itemize}
Hence $\mbox{II}\geq 0$.

Therefore (\ref{keyeq}) holds for small $t$. It follows that in fact
$*\Omega\geq\min_{\Sigma_{t=0}}*\Omega>\frac1{\sqrt{4-\varepsilon}}$
for small $t$. Thus we can continue the same argument to conclude
that the solution remains the graph of a map and satisfies $
*\Omega\geq\min_{\Sigma_{t=0}}*\Omega>\frac1{\sqrt{4-\varepsilon}}$ whenever the
flow exists.

Then by choosing  $\displaystyle
u=\frac{\ln*\Omega-\ln\Omega_0+c}{-\ln\Omega_0+c}$ with $c>0$ to
replace $*\Omega$, the same proof as in \cite{W1} leads to the
long-time existence of the flow. The only thing needed in the
proof is equation (\ref{keyeq}).

The idea goes as follows: To detect a possible singularity, say
$(y_0,t_0)$,  one first isometrically embeds $M$ into $\R^N$ by
Nash theorem, and introduces the backward heat kernel from Huisken
\cite{Hu2}
\begin{align*}
\rho_{y_0,t_0}=\frac1{(4\pi(t_0-t))^\frac{n}2}\mbox{e}^{-\frac{|y-y_0|^2}{4(t_0-t)}}
\end{align*}
Direct computation and using equation (\ref{keyeq}) give
\begin{align}
\frac{d}{dt}\int_{\Sigma_t}(1-u)\rho_{y_0,t_0}d\mu_t\leq
C-\delta\int_{\Sigma_t}|A|^2\rho_{y_0,t_0}d\mu_t \label{omegarho}
\end{align}
for some $C>0$. Therefore, $\lim\limits_{t\to
t_0}\int_{\Sigma_t}(1-u)\rho_{y_0,t_0}d\mu_t$ exists. Consider the
parabolic dilation $D_\lambda$ at $(y_0,t_0)$, that is,
\begin{align*}
(y,t)\stackrel{D_\lambda}{\longmapsto}(\lambda(y-y_0),\lambda^2(t-t_0)),
\end{align*}
and set $s=\lambda^2(t-t_0)$. Denote the corresponding submanifold
and volume form after dilation by $\Sigma^\lambda_s$ and
$d\mu^\lambda_s$ respectively. Because $u$ is invariant under
parabolic dilation, inequality (\ref{omegarho}) becomes
\begin{align}
\frac{d}{ds}\int_{\Sigma^\lambda_s}(1-u)\rho_{0,0}d\mu^\lambda_s
\leq\frac{C}{\lambda^2}-\delta\int_{\Sigma^\lambda_s}\rho_{0,0}|A|^2d\mu^\lambda_s
\label{scalerho}\end{align} With further discussion from
(\ref{scalerho}), one can find
 $\lambda_j\to\infty$ and  $s_j\to -1$ such that
 \begin{align}
\int_{\Sigma^{\lambda_j}_{s_j}\cap
K}|A|^2d\mu^{\lambda_j}_{s_j}\to 0 \mbox{ as } j\to\infty
\label{A}
\end{align}
for any compact set $K$. One can conclude that
$\Sigma^{\lambda_j}_{s_j}\to\Sigma^\infty_{-1}$ as Radon measure
and
    $\Sigma^\infty_{-1}$ is the graph of a linear function with further investigation.
Therefore,
\begin{align*}
\lim_{t\to
t_0}\int\rho_{y_0,t_0}d\mu_t=\lim_{j\to\infty}\int\rho_{0,0}d\mu^{\lambda_{j}}_{s_j}=1
\end{align*}
It implies that $(y_0,t_0)$ is a regular point by White's theorem
in \cite{Wh}, which is a contradiction. Thus no singularity can
occur along the flow. We refer to \cite{W1} for the detailed
argument.

 \hfill $\Box$

\no {\it Proof of (ii): } We use the same expression as in
(\ref{evoeq3}) and will first show that there exists $c_0>0$ which
depends on $\varepsilon, k_1,n$ such that
\begin{align*}
\mbox{II}\geq c_0\sum_{i=1}^n\lambda_i^2\geq
c_0\ln\left(\prod_{i=1}^n\left(1+\lambda_i^2\right)\right)=
-2c_0\ln*\Omega .
\end{align*}
\begin{itemize}
\item[(a)] \underline{If $k_1>0$, and $k_2\leq 0$}, we have
\begin{align*}
\mbox{II} \geq&\sum_{i,k\ne
i}\left(\frac{\lambda_i^2}{\left(1+\lambda_i^2\right)\left(1+\lambda_k^2\right)}k_1
    -\frac{\lambda_i^2\lambda_k^2}{\left(1+\lambda_i^2\right)\left(1+\lambda_k^2\right)}k_2\right) \\
\geq&\sum_{i,k\ne
i}\frac{\lambda_i^2k_1}{\left(1+\lambda_i^2\right)\left(1+\lambda_k^2\right)}\\
\geq&\frac{k_1(n-1)}{4}\sum_{i=1}^n\lambda_i^2
\\ \geq&
\frac{k_1(n-1)}{4}\sum_{i=1}^n\ln (1+\lambda_i^2)
\end{align*}
since $\displaystyle
\frac{1}{\left(1+\lambda_i^2\right)\left(1+\lambda_k^2\right)}\geq
\frac1{\prod_{i=1}^n\left(1+\lambda_i^2\right)}\geq\frac14$ and
$\lambda_i^2 \geq \ln (1+\lambda_i^2)$. Hence we can take
$\displaystyle c_0=\frac{k_1(n-1)}4$.

\item[(b)] \underline{If $k_1\geq k_2>0$}, we need to estimate
curvature terms more carefully. Recall
\begin{align*}
\mbox{II} \geq&\sum_{i,k\ne
i}\left(\frac{\lambda_i^2}{\left(1+\lambda_i^2\right)\left(1+\lambda_k^2\right)}k_1
    -\frac{\lambda_i^2\lambda_k^2}{\left(1+\lambda_i^2\right)\left(1+\lambda_k^2\right)}k_2\right)\\
\geq&\sum_{i,k\ne
i}\left(\frac{\lambda_i^2-\lambda_i^2\lambda_k^2}{\left(1+\lambda_i^2\right)
    \left(1+\lambda_k^2\right)}\right)k_1
    =\sum_{i<k}\left(\frac{\lambda_i^2+\lambda_k^2-2\lambda_i^2\lambda_k^2}{\left(1+\lambda_i^2\right)
    \left(1+\lambda_k^2\right)}\right)k_1
\end{align*}
As observed in the proof of (i), we have
$|\lambda_i\lambda_k|<1-\frac\varepsilon4$ for all $t\geq 0$.
Thus,
\begin{align*}
\lambda_i^2+\lambda_k^2-2\lambda_i^2\lambda_k^2
=\lambda_i\lambda_k(\lambda_i-\lambda_k)^2+(1-\lambda_i\lambda_k)(\lambda_i^2+\lambda_k^2)
\geq \frac\varepsilon4(\lambda_i^2+\lambda_k^2)
\end{align*}
Therefore,
\begin{align*}
\mbox{II}\geq\frac{\varepsilon
k_1}{16}\sum_{i<k}(\lambda_i^2+\lambda_k^2) =\frac{\varepsilon
k_1(n-1)}{16}\sum_{i=1}^n\lambda_i^2\geq \frac{\varepsilon
k_1(n-1)}{16}\sum_{i=1}^n\ln(1+\lambda_i^2)
\end{align*}
We can take $\displaystyle c_0=\frac{\varepsilon k_1(n-1)}{16}$.
\end{itemize}
Hence we can rewrite  (\ref{evoeq3}) as
\begin{align}\label{evoeq4}
\frac{\partial}{\partial
t}\ln*\Omega\geq\Delta\ln*\Omega-2c_0\ln*\Omega
\end{align}
Consider a function $f(t)$ which depends only on $t$ and satisfies
\begin{equation}\label{auxfun}\begin{cases}\begin{aligned}&
\frac{d}{dt}f(t)=-2c_0f(t)\\
&f(0)=\min\limits_{\Sigma_{t=0}}\ln*\Omega
\end{aligned}\end{cases}\end{equation}
which gives $f(t)=f(0)\mbox{e}^{-2c_0t}$. From the inequality
(\ref{evoeq4}) and (\ref{auxfun}),
    we have $$\frac{\partial}{\partial t}(\ln*\Omega-f(t))
    \geq\Delta(\ln*\Omega-f(t))-2c_0(\ln*\Omega-f(t)).$$
Because $\min_{\Sigma_{t=0}}(\ln*\Omega-f(t))\geq 0$,  by the
maximum principle, we have $\displaystyle
\min_{\Sigma_{t>0}}(\ln*\Omega-f(t))\geq 0$. Hence $\displaystyle
0\geq\ln*\Omega\geq f(0)\mbox{e}^{-2c_0t}$ on $\Sigma_{t\geq 0}$.
Letting $t\to\infty$, it gives $*\Omega\to 1$. Then one can apply
the same argument as in \cite{W1} to conclude that the solution
converges
    smoothly to a constant map at infinity.
We outline the proof for this fact in next paragraph.

Given $\varepsilon_1>0$,
    there exists $T$ such that $*\Omega>\frac1{\sqrt{1+\varepsilon_1}}$
    for $t>T$.
    It implies $\sum_i\lambda_i^2<\varepsilon_1$ for $t>T$.
The same method as in (\ref{ineqa}) and taking $\delta$ larger,
for example $\displaystyle \delta=\frac12$,
    gives
$ \displaystyle \frac{\partial}{\partial
t}*\!\Omega\geq\Delta*\!\Omega+\frac12*\!\Omega|A|^2$. The
evolution equation for the second fundamental form is
\begin{align*}
\frac{\partial}{\partial t}|A|^2\leq\Delta|A|^2-2|\nabla
A|^2+K_1|A|^4+K_2|A|^2
\end{align*}
for some constants $K_1, K_2$. The $K_1|A|^4$ term will cause some
trouble, but one can consider the evolution inequality of
$(*\Omega)^{-2p}|A|^2$, which is
     \begin{align*}
&\frac{\partial}{\partial t}\left((*\Omega)^{-2p}|A|^2\right) \\
\leq&\,\Delta\left((*\Omega)^{-2p}|A|^2\right)
-(*\Omega)^{-2p}\nabla\left((*\Omega)^{-2p}\right)\cdot\nabla\left((*\Omega)^{-2p}|A|^2\right)
\\& \,
+(*\Omega)^{-2p}|A|^2\left(|A|^2\left(K_1-p+2p(p-1)n\varepsilon_1\right)+K_2\right)
\end{align*}
    Choose $\varepsilon_1$ small, and a suitable $p=p(n,\varepsilon_1)$
     so that the coefficient of the highest order nonlinear term in the evolution inequality of
    $(*\Omega)^{-2p}|A|^2$ is negative.
By the maximum principle,  one gets an upper bound of
$\max_{\Sigma_t}|A|^2$ and concludes that $\max_{\Sigma_t}|A|^2\to
0$ as $t\to\infty$.
    It implies that the mean curvature flow of $\Sigma$ converges to a totally geodesic submanifold of $M$.
Since $*\Omega\to 1$ as $t\to\infty$, we have $|df_t|\to 0$ and
the limit is a constant map.

\hfill $\Box$
\begin{rem}
When $n=1$, then $k_1=0$ and (ii) cannot apply. In fact, term
$I\!I$ vanishes in this case and one cannot obtain the convergence
using the same method.
\end{rem}
\section{The area-decreasing case} In this section, we mainly
follow the discussion and set-up in \cite{TW1}. Consider a
parallel symmetric two tensor $S$ on $M$ defined as
\begin{align*}
S(X,Y)=g(\pi_1(X),\pi_1(Y))-h(\pi_2(X),\pi_2(Y)),
\end{align*}
where $\pi_1$ and $\pi_2$ are the projections into $TN_1$ and
$TN_2$ respectively. The same  calculation as for $*\Omega$ leads
to  the following evolution equation for  $S$ on $\Sigma_t$, which
appears in \cite{TW1},
\begin{align*}
\left(\frac{\partial}{\partial t}-\Delta\right)S_{ij}
=&-h_{il}^\alpha h_{kk}^\alpha S_{lj}-h_{jl}^\alpha h_{kk}^\alpha
S_{li}
    +R_{kik\alpha}S_{\alpha j}+R_{kjk\alpha}S_{\alpha i} \notag \\
    &+h_{kl}^\alpha h_{ki}^\alpha S_{lj}+h_{kl}^\alpha h_{kj}^\alpha S_{li}-2h_{ki}^\alpha h_{kj}^\beta S_{\alpha\beta}
\end{align*}
where $S_{ij}=S(e_i,e_j), S_{\alpha i}=S(e_\alpha, e_i),
S_{\alpha\beta}
    =S(e_\alpha,e_\beta), i,j=1,\ldots,n; \alpha,\beta=n+1,\ldots,n+m$.

One can simply the equation in terms of evolving orthonormal
frames. Denote $\bar{g}=g+h$ which is the product metric on
$M=N_1\times N_2$.   Suppose that
  $\bar{F}=\{F_1,\ldots,F_a,\ldots,F_n\}$ are orthonormal frames on $T_p\Sigma_t$.
We evolve $\bar{F}$ by the formula
\begin{align} \label{evframe}
\frac{\partial}{\partial t}F^i_a
=\bar{g}^{ij}\bar{g}_{\alpha\beta}h^\alpha_{kj}H^\beta F^k_a
\end{align}
where $\alpha$ and $\beta$ are in the normal direction  and
$H^\beta$ is the $\beta$ component of the mean curvature vector.

Let $S_{ab}=S_{ij}F^i_aF^j_b=S(F_a,F_b)$ be the component of
$S$ in $\bar{F}$. Then $S_{ab}$ satisfies the following equation
\begin{align}
\left(\frac{\partial}{\partial t}-\Delta\right)S_{ab}
=R_{cac\alpha}S_{\alpha b}+R_{cbc\alpha}S_{\alpha a}
+h^\alpha_{cd}h^\alpha_{ca}S_{db}+h^\alpha_{cd}h^\alpha_{cb}S_{da}-2h^\alpha_{ca}h^\beta_{cb}S_{\alpha\beta}
\label{onfs2}
\end{align}

We remark that when we use the bases (\ref{onf1}) and
(\ref{onf2}), the expression of $S$ is
\begin{align*}
S=S(E_i,E_j)_{1\leq i,j\leq n+m} =\left(\begin{array}{cccc}
B & 0 & D & 0\\
0 & I_{(n-r)\times(n-r)} & 0 & 0\\
D & 0 & -B & 0\\
0 & 0 & 0 & -I_{(m-r)\times(m-r)}
\end{array}\right)
\end{align*}
where $B$ and $D$ are $r$ by $r$ matrices with
\begin{align*}
B_{ij} =S(E_i,E_j)
=\frac{1-\lambda_i^2}{1+\lambda_i^2}\delta_{ij}\quad\mbox{and}\quad
D_{ij} =S(E_i,E_{n+j})
=-\frac{2\lambda_i}{1+\lambda_i^2}\delta_{ij}.
\end{align*}

A map $f:N_1\to N_2$ is called {\it area-decreasing} if
\begin{align*}
\left|\wedge^2df\right|(x)=\sup_{|u\wedge
v|=1}\left|\left(\wedge^2df\right)(u\wedge v)\right|
=\sup_{|u\wedge v|=1}|df(u)\wedge df(v)|<1.
\end{align*}
In the bases (\ref{onf1}) and (\ref{onf2}), the area-decreasing
condition is equivalent to
\begin{align*}
\left|\wedge^2df\right|(x)=\sup_{i<j}\lambda_i\lambda_j<1
\Leftrightarrow |\lambda_i\lambda_j|<1\ \forall \, i \neq j\, .
\end{align*}
On the other hand, the sum of any two eigenvalues of $S$ is
\begin{align*}
\frac{1-\lambda_i^2}{1+\lambda_i^2}+\frac{1-\lambda_j^2}{1+\lambda_j^2}
=\frac{2(1-\lambda_i^2\lambda_j^2)}{(1+\lambda_i^2)(1+\lambda_j^2)}
\end{align*}
Thus, the area-decreasing condition is equivalent to two
positivity of $S$ .

Since $S$ is bilinear, by the Riesz representation theorem,
    we can identify $S$ with a self-adjoint operator(still denoted by $S$).
Hence, for the orthonormal frame $\bar{F}$, we have
$S_{ab}=S(F_a,F_b)=\bar{g}(S(F_a),F_b)$, which implies
$S(F_a)=S_{ab}F_b$.

With this identification, we can construct a new self-adjoint
operator
    $S^{[2]}=S\otimes 1+1\otimes S$ on $T_p\Sigma_t\wedge T_p\Sigma_t$,
    which is defined by $ S^{[2]}(w_1\wedge w_2)=S(w_1)\wedge w_2+w_1\wedge
    S(w_2)$.
     If $\mu_1\leq\cdots\leq\mu_n$ are the eigenvalues of $S$ with
the corresponding eigenvectors $v_1,\ldots,v_n$,
    then $S^{[2]}$ has eigenvalues $u_{i_1}+u_{i_2}$ with eigenvectors $v_{i_1}\wedge v_{i_2}, i_1\leq i_2$.
Thus, the positivity of $S^{[2]}$ is equivalent to the area
decreasing condition. Similarly, for the metric $\bar{g}$, we can
construct a self-adjoint operator $\bar{g}^{[2]}=\bar{g}\otimes
1+1\otimes\bar{g}$.

Note that  $\{F_a\wedge F_b\}_{a<b}$ form an orthonormal basis for
$\wedge^2T\Sigma_t$ and
\begin{align}
S^{[2]}(F_a\wedge F_b) =&S(F_a)\wedge F_b+F_a\wedge S(F_b)
=S_{ac}F_c\wedge F_b+F_a\wedge S_{ac}F_c \notag \\
=&\sum_{c<d}(S_{ac}\delta_{bd}+S_{bd}\delta_{ac}-S_{ad}\delta_{bc}-S_{bc}\delta_{ad})F_c\wedge F_d \label{S2} \\
\bar{g}^{[2]}(F_a\wedge
F_b)=&\sum_{c<d}(2\delta_{ac}\delta_{bd}-2\delta_{ad}\delta_{bc})F_c\wedge
F_d \notag
\end{align}

We can improve the main theorem in \cite{TW1} to the following
\begin{thm} \label{thm2}
Let $(N_1, g)$ and $(N_2, h)$ be two compact Riemannian manifolds,
and $f$ be a smooth map from $N_1$ to $N_2$. Assume that
$K_{N_1}\geq k_1$ and $K_{N_2}\leq k_2$ for two constants $k_1$
and $k_2$,
    where $K_{N_1}$ and $K_{N_2}$ are the sectional curvature of $N_1$ and $N_2$ respectively.
Suppose either $k_1\geq 0, k_2\leq 0$, or $k_1\geq k_2>0$, then the following results hold:
\begin{itemize}
\item[(i)] If $f$ is a smooth area decreasing map from $N_1$ to $N_2$,
    then the mean curvature flow of the graph of $f$ remains the graph of an area decreasing map,
    and exists for all time.
\item[(ii)] Furthermore, if $k_1>0$, then the mean curvature flow converges smoothly to the graph of a constant map.
\end{itemize}
\end{thm}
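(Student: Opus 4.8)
The plan is to mirror the three-step structure used for Theorem \ref{thm1}, replacing the role of $*\Omega$ by the operator $S$ (equivalently $S^{[2]}$), since the area-decreasing condition is exactly the two-positivity of $S$, i.e. the positivity of $S^{[2]}$. For part (i), the first step is to derive from the evolution equation (\ref{onfs2}) an evolution equation (or inequality) for $S^{[2]}$ acting on $\wedge^2 T\Sigma_t$, using (\ref{S2}). The curvature contribution to $\partial_t S^{[2]} - \Delta S^{[2]}$ will be a sum of terms of the form $R_{cac\alpha}S_{\alpha b}$ paired against a $2$-vector; using the block form of $S$ in the bases (\ref{onf1})--(\ref{onf2}), where $S_{\alpha i}$ is governed by $D_{ij} = -\tfrac{2\lambda_i}{1+\lambda_i^2}\delta_{ij}$, these curvature terms reduce — just as in the computation of II in \S 3 — to combinations of $K_{N_1}$ and $K_{N_2}$ evaluated on the frames $\{a_i\}$ and $\{a_{n+i}\}$, weighted by rational functions of the $\lambda_i$. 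The key point is to show that on a null eigenvector of $S^{[2]}$ (i.e. when some $\lambda_i^2\lambda_j^2 \to 1$) the reaction term is $\geq 0$ under the hypothesis $k_1\geq 0,\ k_2\leq 0$ or $k_1\geq k_2>0$; this is the analogue of the inequalities ``$\mathrm{II}\geq 0$'' established in cases (a) and (b) of the proof of Theorem \ref{thm1}(i), and the algebraic identity $\lambda_i^2+\lambda_k^2-2\lambda_i^2\lambda_k^2 = (\lambda_i-\lambda_k)^2 + 2\lambda_i\lambda_k(1-\lambda_i\lambda_k)\geq 0$ will again be the crucial elementary fact. The second fundamental form terms in (\ref{onfs2}) are handled exactly as in \cite{TW1}: one checks they are nonnegative on the null cone of $S^{[2]}$ by a Cauchy–Schwarz / completing-the-square argument. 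Combining these, Hamilton's maximum principle for tensors (or the vector-bundle version applied to $S^{[2]}$) yields that two-positivity of $S$ is preserved, i.e. the flow stays the graph of an area-decreasing map; and since $N_1$ is compact the initial map satisfies $\sup_{i<j}\lambda_i\lambda_j \leq 1-\varepsilon$, so a uniform strict area-decreasing bound $\lambda_i\lambda_j \leq 1-\varepsilon/2$ propagates by the same continuity-plus-reapplication argument used in \S 3. This uniform bound gives $*\Omega > 1/\sqrt{4-\varepsilon}$, hence inequality (\ref{keyeq}) holds with a fixed $\delta$, and then the long-time existence follows verbatim from the White/Huisken blow-up argument of \cite{W1} sketched after (\ref{omegarho}).

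For part (ii), under the extra assumption $k_1>0$ I would run the same refined-estimate scheme as in the proof of Theorem \ref{thm1}(ii): the uniform area-decreasing bound from part (i) forces $|\lambda_i\lambda_k| < 1-\varepsilon/4$, so in equation (\ref{evoeq2}) the curvature term II satisfies $\mathrm{II}\geq c_0\sum_i\lambda_i^2 \geq -2c_0\ln *\Omega$ for a constant $c_0 = c_0(\varepsilon,k_1,n)>0$, exactly as computed in cases (a) and (b) there. Then the auxiliary ODE comparison function $f(t) = f(0)e^{-2c_0 t}$ with $f(0) = \min_{\Sigma_0}\ln *\Omega$ and the maximum principle give $0 \geq \ln *\Omega \geq f(0)e^{-2c_0 t}$, so $*\Omega \to 1$ exponentially and $|df_t|\to 0$; finally, the $(*\Omega)^{-2p}|A|^2$ argument of \S 3 bounds and then drives $\max_{\Sigma_t}|A|^2 \to 0$, forcing smooth convergence to a totally geodesic graph, which must be a constant map since $|df_t|\to 0$. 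All of this is word-for-word the same as the end of \S 3 once the uniform strict area-decreasing estimate is in hand.

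The main obstacle — and the only genuinely new work relative to \cite{TW1} and \S 3 — is the first step of part (i): verifying that the curvature terms in the evolution of $S^{[2]}$ are nonnegative on the null cone of $S^{[2]}$ under the weakened, \emph{non-constant} curvature hypotheses. Concretely, one must take a null eigenvector $v_i\wedge v_j$ of $S^{[2]}$ (so $\mu_i + \mu_j = 0$, i.e. $\lambda_i^2\lambda_j^2 = 1$ in the singular-value frame), evaluate the reaction term, and show it has the right sign using only $K_{N_1}\geq k_1$, $K_{N_2}\leq k_2$ and the sign conditions on $k_1,k_2$ — in the constant-curvature case of \cite{TW1} all sectional curvatures are literally equal to $k_1$ or $k_2$ and many terms combine exactly, whereas here one only has inequalities and must be careful that the coefficients multiplying $K_{N_1}$ are nonnegative and those multiplying $K_{N_2}$ are nonpositive on the null cone. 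I expect this to work precisely because on the null cone the relevant coefficients simplify (using $\lambda_i\lambda_j = 1$) to the pattern already seen in the identity $\lambda_i^2+\lambda_k^2 - 2\lambda_i^2\lambda_k^2 \geq 0$, but making this rigorous, including the boundary/degenerate cases where some $\lambda$'s vanish, is the delicate part; everything after it is a routine transcription of the arguments already in the paper.
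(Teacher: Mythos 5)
Your plan mirrors the paper's proof almost exactly: the paper also uses the evolution equation (\ref{onfs2}) for $S_{ab}$, introduces $M_\eta = S^{[2]} - \varepsilon\bar g^{[2]} + \eta t\bar g^{[2]}$, applies the tensor maximum principle at a first-time null eigenvector $V = E_1\wedge E_2$ (arranging $\lambda_1\geq\lambda_2\geq\cdots$ so that this is the extremal $2$-vector), splits $(\partial_t-\Delta)f = 2\eta + \mathrm{I}+\mathrm{II}$ into curvature and second-fundamental-form terms, shows $\mathrm I\geq 0$ under the two curvature hypotheses using precisely the algebraic identity you cite, cites \cite{TW1} for $\mathrm{II}\geq 0$, and then funnels everything through (\ref{keyeq}) and the $\S 3$ blow-up / $(*\Omega)^{-2p}|A|^2$ machinery for long-time existence and convergence. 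So the approach is the right one.

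However, there is a concrete logical error in the bridge you build from the area-decreasing estimate to long-time existence. You assert that a uniform bound $\lambda_i\lambda_j\leq 1-\varepsilon/2$ for $i\neq j$ ``gives $*\Omega > 1/\sqrt{4-\varepsilon}$, hence inequality (\ref{keyeq}) holds.'' The first claim is false: area-decreasing controls only the pairwise products $\lambda_i\lambda_j$ and does \emph{not} bound $*\Omega = \bigl(\prod_i(1+\lambda_i^2)\bigr)^{-1/2}$ away from zero. Take, say, $\lambda_1 = R$ arbitrarily large and $\lambda_2=\cdots=\lambda_n=0$; then every $\lambda_i\lambda_j = 0 < 1$ but $*\Omega = (1+R^2)^{-1/2}\to 0$. (This is exactly why Theorem \ref{thm1} needs the extra hypothesis $\det((g+f^*h)_{ij})/\det(g_{ij})<4$, which is a condition on $\prod(1+\lambda_i^2)$, not on pairwise products.) The second claim also has the causality reversed: (\ref{keyeq}) does not follow from an a priori lower bound on $*\Omega$; it follows \emph{directly} from $|\lambda_i\lambda_j|\leq 1-\delta$ (which controls the quadratic form I in $\S 3$) together with the curvature sign conditions (which control II). It is (\ref{keyeq}), via the parabolic minimum principle, that then yields the lower bound $*\Omega \geq \min_{\Sigma_0}*\Omega>0$; this bound equals $\min_{\Sigma_0}*\Omega$, not $1/\sqrt{4-\varepsilon}$, and it is this $\min_{\Sigma_0}*\Omega$ that enters the auxiliary function $u=(\ln*\Omega - \ln\Omega_0 + c)/(-\ln\Omega_0+c)$ and, in part (ii), replaces the factor $1/4$ in the estimate $c_0 = k_1(n-1)/\bigl(\prod(1+\lambda_i^2)\bigr)$ with $(\min_{\Sigma_0}*\Omega)^2$. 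Once you correct the order of implications — area-decreasing bound $\Rightarrow$ (\ref{keyeq}) $\Rightarrow$ $*\Omega$ bounded below $\Rightarrow$ graph preserved, running in tandem with the $M_\eta$ maximum principle to propagate the area-decreasing bound — the argument closes as in the paper.
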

\begin{proof}
Notice that we already  prove in section \ref{section3} that
$\Sigma _t$ remains the graph of a map under the assumption
whenever the flow exists.
   Now we want to prove that the area-decreasing property is also
   preserved along the mean curvature flow.
 Since the initial map is area-decreasing, there exists
$\varepsilon>0$ such that $S^{[2]}-\varepsilon \bar{g}^{[2]}\ge
0$. We want to show that the property $S^{[2]}-\varepsilon
\bar{g}^{[2]}$ is preserved along the mean curvature flow. Let
$M_\eta=S^{[2]}-\varepsilon \bar{g}^{[2]}+\eta t\bar{g}^{[2]}$.
Suppose the mean curvature flow exists on $[0,T)$. Consider any
$T_1<T$, it suffices to show that $M_\eta>0$ on $[0,T_1]$ for all
$\eta<\frac{\varepsilon}{2 T_1}$. If it does not hold, there will
be a first time $0<t_0<T_1$, where $M_\eta$ is nonnegative
definite,
    and there is a null eigenvector $V=V^{ab}F_a\wedge F_b$ for $M_\eta$ at some point $x_0\in\Sigma_{t_0}$.
We extend $V$ to a parallel vector field in a neighborhood of $x_0$ along geodesic emanating out of $x_0$,
    and defined $V$ on $[0,T)$ independent of $t$.

Define a function $f=M_\eta(V,V)$, then the function $f$ has the
following properties at $(x_0,t_0)$:
\begin{itemize}
\item[(F1)] $f=0$ \hspace{22mm}($V$ is the null-eigenvector)
\item[(F2)] $\nabla f=0$ \hspace{18mm}(At $t=t_0$, $f$ attains
minimum on $x_0$) \item[(F3)] $\left(\frac{\partial}{\partial
t}-\Delta\right)f\leq 0$ \hspace{5mm}(At $t=t_0$, $f$ attains
minimum on $x_0$)
\end{itemize}

At $(x_0,t_0)$, we choose the orthonormal basis $\{F_a\}$ as
$\{E_i\}$ in (\ref{onf1}),
    and rearrange them such that the singular values $\lambda_i$ satisfy
    $\lambda_1\geq \lambda_2\geq\cdots\geq \lambda_n\geq 0$.
Thus,
\begin{align*}
S_{nn}=\frac{1-\lambda_n^2}{1+\lambda_n^2}\geq\cdots\geq S_{22}
=\frac{1-\lambda_2^2}{1+\lambda_2^2}\geq S_{11}=\frac{1-\lambda_1^2}{1+\lambda_1^2}
\end{align*}
Hence the null eigenvector must be $V=E_1\wedge E_2$. From (F1),
it follows that $f=S_{11}+S_{22}+2(\eta t_0-\varepsilon)=0$ at
$(x_0,t_0)$ which implies $S_{11}+S_{22}=2(\varepsilon-\eta
t_0)>0$. Thus, we have
\begin{align}
\lambda_1\lambda_2<1, \quad\mbox{and} \quad\lambda_i<1
\quad\mbox{for}\quad i\geq 2 \label{lambda}
\end{align}

Use (\ref{evframe}) to evolve $\{F_a\}$. Then at $(x_0,t_0)$,
direct computation gives
\begin{align*}
\left(\frac{\partial}{\partial t}-\Delta\right)f
=&2\eta+2R_{k1k\alpha}S_{\alpha 1}+2R_{k2k\alpha}S_{\alpha 2} \\
+&2h^\alpha_{kj}h^\alpha_{k1}S_{j1}+2h^\alpha_{kj}h^\alpha_{k2}S_{j2}
    -2h^\alpha_{k1}h^\beta_{k1}S_{\alpha\beta}-2h^\alpha_{k2}h^\beta_{k2}S_{\alpha\beta} \\
=&2\eta+\mbox{I}+\mbox{II}
\end{align*}
where
\begin{align}
\mbox{I}
=&\mbox{curvature tensor terms} \notag \\
=&2R_{k1k\alpha}S_{\alpha 1}+2R_{k2k\alpha}S_{\alpha 2}=2R_{k1k(n+1)}S_{(n+1)1}+2R_{k2k(n+2)}S_{(n+2)2} \notag \\
=&\sum_{k\neq 1}\frac{2\lambda_1^2}{(1+\lambda_k^2)(1+\lambda_1^2)^2}\langle R_1(a_k,a_1)a_k, a_1\rangle
    +\sum_{k\neq 2}\frac{2\lambda_2^2}{(1+\lambda_k^2)(1+\lambda_2^2)^2}\langle R_1(a_k,a_2)a_k,a_2\rangle \notag \\
-&\sum_{k\neq 1}\frac{2\lambda_k^2\lambda_1^2}{(1+\lambda_k^2)(1+\lambda_1^2)^2}\langle R_2(a_k,a_1)a_k,a_1\rangle
    -\sum_{k\neq 2}\frac{2\lambda_k^2\lambda_2^2}{(1+\lambda_k^2)(1+\lambda_2^2)^2}
    \langle R_2(a_k,a_2)a_k,a_2\rangle \notag \\
\geq&\sum_{k\neq 1}\frac{2\lambda_1^2}{(1+\lambda_k^2)(1+\lambda_1^2)^2}k_1
    +\sum_{k\neq 2}\frac{2\lambda_2^2}{(1+\lambda_k^2)(1+\lambda_2^2)^2}k_1 \notag \\
-&\sum_{k\neq 1}\frac{2\lambda_k^2\lambda_1^2}{(1+\lambda_k^2)(1+\lambda_1^2)^2}k_2
    -\sum_{k\neq 2}\frac{2\lambda_k^2\lambda_2^2}{(1+\lambda_k^2)(1+\lambda_2^2)^2}k_2 \label{ctta1} \notag \\
\mbox{II}
=&\mbox{second fundamental form terms} \notag \\
=&2h^\alpha_{kj}h^\alpha_{k1}S_{j1}+2h^\alpha_{kj}h^\alpha_{k2}S_{j2}
    -2h^\alpha_{k1}h^\beta_{k1}S_{\alpha\beta}-2h^\alpha_{k2}h^\beta_{k2}S_{\alpha\beta} \notag
\end{align}

For curvature tensor terms I,
\begin{itemize}
\item[(a)] \underline{If $k_1\geq 0, k_2\leq 0$}, we have $\mbox{I}\geq 0$.
\item[(b)] \underline{If $k_1\ge k_2>0$}, then
\begin{align*}
\mbox{I}
&\geq k_1\left(\sum_{k\neq 1}\frac{2\lambda_1^2-2\lambda_k^2\lambda_1^2}{(1+\lambda_k^2)(1+\lambda_1^2)^2}
    +\sum_{k\neq 2}\frac{2\lambda_2^2-2\lambda_k^2\lambda_2^2}{(1+\lambda_k^2)(1+\lambda_2^2)^2}\right) \\
&=k_1\left(\frac{2\lambda_1^2-2\lambda_2^2\lambda_1^2}{(1+\lambda_2^2)(1+\lambda_1^2)^2}
    +\frac{2\lambda_2^2-2\lambda_1^2\lambda_2^2}{(1+\lambda_1^2)(1+\lambda_2^2)^2}\right. \\
&\hspace*{12mm}\left.+\sum_{k\geq 3}\frac{2\lambda_1^2-2\lambda_k^2\lambda_1^2}{(1+\lambda_k^2)(1+\lambda_1^2)^2}
    +\sum_{k\geq 3}\frac{2\lambda_2^2-2\lambda_k^2\lambda_2^2}{(1+\lambda_k^2)(1+\lambda_2^2)^2}\right) \\
&\geq k_1\left(\frac{2\lambda_1^2+2\lambda_2^2-4\lambda_2^2\lambda_1^2}{(1+\lambda_1^2)^3}\right)
    +\sum_{k\geq 3}k_1\left(\frac{2\lambda_1^2(1-\lambda_k^2)}{(1+\lambda_k^2)(1+\lambda_1^2)^2}
    +\frac{2\lambda_2^2(1-\lambda_k^2)}{(1+\lambda_k^2)(1+\lambda_2^2)^2}\right) \\
&\geq k_1\left(\frac{2(\lambda_1-\lambda_2)^2+4\lambda_1\lambda_2(1-\lambda_1\lambda_2)}{(1+\lambda_1^2)^3}\right)
    \hspace{15mm}\mbox{(here we use (\ref{lambda}))} \\
&\geq 0 \hspace{79mm}\mbox{(here we use (\ref{lambda}))}
\end{align*}
\end{itemize}

Since the second fundamental form terms do not involve curvatures,
$\mbox{II}$ is nonnegative as proved in \cite{TW1}. Since  both
$\mbox{I}\geq 0$ and $\mbox{II}\geq 0$ at $(x_0,t_0)$,
   we have $\left(\frac{\partial}{\partial t}-\Delta\right)f\ge 2\eta>0$ at $(x_0,t_0)$,
    which contradicts to (F3).
Thus the area-decreasing property is preserved by the mean
curvature flow. We can also apply the same proof to obtain
long-time existence and convergence as in section \ref{section3}.
The theorem is therefore proved.
\end{proof}

\section{Application}
\begin{cor} \label{cor1}
Let $N_1, N_2$ be compact manifolds and $\mbox{dim }N_1\geq 2$.
Suppose that there exist Riemannian metrics $g_1$ and $g_2$ on
$N_1$ and $N_2$  with sectional curvature $K_{N_1(g_1)}>0$ and
$K_{N_2(g_2)}\leq 0$. Then any map from $N_1$ to $N_2$ must be
homotopic to a constant map.
\end{cor}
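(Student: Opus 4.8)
The plan is to deduce this from Theorem \ref{thm2} by first rescaling the metrics so that $f$ becomes area decreasing, and then reading off the desired homotopy from the mean curvature flow itself. First I would do the curvature bookkeeping: since $N_1$ is compact, $K_{N_1(g_1)}>0$ gives a positive constant $k_1$ with $K_{N_1(g_1)}\ge k_1>0$, while $K_{N_2(g_2)}\le 0=:k_2$, so we are in the case $k_1\ge 0$, $k_2\le 0$ of Theorem \ref{thm2}, and moreover $k_1>0$ so part (ii) applies. These bounds are stable under rescaling the domain metric: replacing $g_1$ by $c^2 g_1$ for a constant $c>0$ changes $K_{N_1}$ to $c^{-2}K_{N_1(g_1)}$, still bounded below by a positive constant on the compact $N_1$, and leaves $K_{N_2(g_2)}\le 0$ untouched.

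Next I would make $f$ area decreasing by such a rescaling. The singular values $\lambda_1,\dots,\lambda_n$ of $df$ at a point (the eigenvalues of $\sqrt{(df)^Tdf}$ taken with respect to $g_1$ and $g_2$) scale as $\lambda_i\mapsto\lambda_i/c$ when $g_1$ is replaced by $c^2g_1$, because a $c^2g_1$-orthonormal basis is $1/c$ times a $g_1$-orthonormal one; hence $\lambda_i\lambda_j\mapsto\lambda_i\lambda_j/c^2$. Since $\sup_{x\in N_1}\sup_{i<j}\lambda_i\lambda_j$ is finite by compactness, choosing $c$ large enough forces $\sup_{i<j}\lambda_i\lambda_j<1$ everywhere, i.e. $f:(N_1,c^2g_1)\to(N_2,g_2)$ is area decreasing. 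This is precisely where $\dim N_1\ge 2$ enters: for $n=1$ there is no area-decreasing condition to exploit (and the statement is genuinely false then, e.g. for $S^1\to S^1$).

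With these normalized metrics, Theorem \ref{thm2} applies directly: the mean curvature flow of the graph $\Sigma$ of $f$ in $N_1\times N_2$ exists for all $t\in[0,\infty)$, each $\Sigma_t$ is the graph of a map $f_t:N_1\to N_2$ with $f_0=f$, and $\Sigma_t$ converges smoothly as $t\to\infty$ to the graph of a constant map $f_\infty$. The assignment $t\mapsto f_t$ is smooth, and by the smooth convergence it extends continuously to $t=\infty$ with value $f_\infty$; reparametrizing $[0,\infty]$ to $[0,1]$ produces a homotopy from $f$ to the constant map $f_\infty$. Since homotopy type is independent of the auxiliary Riemannian metrics, this proves the corollary for the original $g_1,g_2$ as well.

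I do not expect a real obstacle here: the substance is all contained in Theorem \ref{thm2}, and the only thing to be careful about is that the flow gives a homotopy through honest maps $N_1\to N_2$, i.e. that $\pi_1|_{\Sigma_t}$ stays a diffeomorphism. This is exactly the assertion ``remains the graph of a map'' in Theorem \ref{thm2}, and in any case follows from $*\Omega>0$: then $\pi_1|_{\Sigma_t}$ is a local diffeomorphism between compact $n$-manifolds, hence a covering map, hence a diffeomorphism since it is homotopic to $\pi_1|_{\Sigma_0}=\mathrm{id}$. Thus the entire content of the proof is the observation that an arbitrary smooth $f$ can be made area decreasing simply by scaling up the domain metric, after which Theorem \ref{thm2} does the rest.
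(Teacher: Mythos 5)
Your proof is correct and is essentially identical to the paper's: both rescale $g_1$ by a large constant (the paper uses $\bar g_1=2Lg_1$ where $L$ bounds $\sup_{i<j}\lambda_i\lambda_j$, you use $c^2g_1$ with $c$ large), observe that this preserves positive sectional curvature on $N_1$ while scaling the singular value products by $1/c^2$ so that $f$ becomes area decreasing, and then invoke Theorem \ref{thm2} to flow the graph to a constant map. The extra remarks you include — that the flow stays graphical so $\pi_1|_{\Sigma_t}$ is a diffeomorphism and that $\dim N_1\geq 2$ is what makes the area-decreasing condition nonvacuous — are correct and useful glosses but not a different argument.
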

\begin{proof}
For any given map $f:N_1\to N_2$, we can consider the singular
value decomposition of $df$ with respect to $g_1$ and $g_2$.
Denote the corresponding singular values by
$\lambda_1,\ldots,\lambda_n$. Since $N_1$ is compact, there exists
a positive constant $L$ such that $\lambda_i\lambda_j\leq L$.
Define a new metric $\bar{g}_1=2Lg_1$ on $N_1$. The singular
values of $df$ with respect to $\bar{g}_1$ and $g_2$ will be
$\bar{\lambda}_1=\frac{\lambda_1}{\sqrt{2L}},\ldots,\bar{\lambda}_n=\frac{\lambda_n}{\sqrt{2L}}$.
 Therefore, we have $\bar{\lambda}_i\bar{\lambda}_j\leq
\frac12<1$ and $K_{N_1(\bar{g}_1)}>0$.  Applying the mean
curvature flow to the graph of $f$ in
$(N_1,\bar{g}_1)\times(N_2,g_2)$, by Theorem \ref{thm2} we
conclude that $f$ is homotopic to a constant map.
\end{proof}

For general cases, we can obtain the null homotopic property in
terms of $2$-dilation. Recall that the $2$-dilation (or more
generally, $k$-dilation) of a map $f$ between $N_1$ and $N_2$ is
said at most $D$ if $f$ maps each $2$-dimensional
($k$-dimensional) submanifold in $N_1$ with volume $V$ to an image
with volume at most $DV$. The $2$-dilation can also be defined in
terms of $df$, which is equal to the supremum of the norm
$|\wedge^2\!df|$.

We have the following corollary:
\begin{cor}\label{App}
Let $(N_1,g_1), (N_2,g_2)$ be compact Riemannian manifolds with
$K_{N_1(g_1)}\geq k_1$, $K_{N_2(g_2)}\leq k_2$, and both $k_1$ and
$k_2$ are positive constants. If the $2$-dilation of
$f:(N_1,g_1)\to(N_2,g_2)$ is less than
$\displaystyle\frac{k_1}{k_2}$, then $f$ is homotopic to a
constant map.
\end{cor}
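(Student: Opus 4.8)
The plan is to reduce Corollary \ref{App} to Theorem \ref{thm2} by a rescaling argument on the domain metric, exactly as in the proof of Corollary \ref{cor1}. First I would take an arbitrary smooth map $f:(N_1,g_1)\to(N_2,g_2)$ and consider the singular value decomposition of $df$ with respect to $g_1$ and $g_2$, with singular values $\lambda_1,\ldots,\lambda_n$ depending on the point. The hypothesis that the $2$-dilation of $f$ is less than $\frac{k_1}{k_2}$ means precisely that $\sup_{i<j}\lambda_i\lambda_j < \frac{k_1}{k_2}$ over all of $N_1$; by compactness of $N_1$ this supremum is attained, so there is a constant $L$ with $\lambda_i\lambda_j \leq L < \frac{k_1}{k_2}$ for all $i\neq j$.

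Next I would rescale the domain metric. Set $\bar g_1 = c\, g_1$ for a constant $c>1$ to be chosen. Under this rescaling the singular values of $df:(N_1,\bar g_1)\to(N_2,g_2)$ become $\bar\lambda_i = \lambda_i/\sqrt{c}$, so $\bar\lambda_i\bar\lambda_j \leq L/c$, while the sectional curvature transforms as $K_{N_1(\bar g_1)} = \frac{1}{c}K_{N_1(g_1)} \geq \frac{k_1}{c} =: \bar k_1 > 0$. The curvature bound on $N_2$ is unchanged, $K_{N_2(g_2)} \leq k_2 =: \bar k_2 > 0$. To apply Theorem \ref{thm2} I need $\bar k_1 \geq \bar k_2$, i.e. $k_1/c \geq k_2$, which forces $c \leq k_1/k_2$; and I need $f$ to be area-decreasing with respect to $\bar g_1$ and $g_2$, i.e. $\bar\lambda_i\bar\lambda_j < 1$, which holds as soon as $L/c < 1$, i.e. $c > L$. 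Since $L < k_1/k_2$, the interval $(L, k_1/k_2]$ is nonempty, so such a $c$ exists (for instance $c = \frac{1}{2}(L + k_1/k_2)$, or simply $c = k_1/k_2$ which gives $\bar k_1 = \bar k_2$ and $\bar\lambda_i\bar\lambda_j \leq L k_2/k_1 < 1$).

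With this choice of $c$, the pair $(N_1,\bar g_1)$, $(N_2,g_2)$ satisfies the hypotheses of Theorem \ref{thm2} in the case $k_1 \geq k_2 > 0$ (with $\bar k_1 \geq \bar k_2 > 0$), and $f$ is a smooth area-decreasing map between them. Applying the mean curvature flow to the graph of $f$ in $(N_1,\bar g_1)\times(N_2,g_2)$, part (ii) of Theorem \ref{thm2} gives that the flow exists for all time and converges smoothly to the graph of a constant map. Since the graphs $\Sigma_t$ provide a smooth homotopy from the graph of $f$ to the graph of a constant map, and each graph is diffeomorphic to $N_1$ via projection, $f$ is homotopic to a constant map. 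The only mild obstacle is the bookkeeping to verify that one can choose $c$ simultaneously making $\bar\lambda_i\bar\lambda_j<1$ and $\bar k_1 \geq \bar k_2$; this is exactly where the hypothesis $2$-dilation $<\frac{k_1}{k_2}$ (strict inequality) is used, and it goes through cleanly by the compactness argument above. Everything else is a direct invocation of Theorem \ref{thm2}.
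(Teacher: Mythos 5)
Your proof is correct and follows essentially the same strategy as the paper: rescale metrics so that $f$ becomes area-decreasing while the curvature hypotheses of Theorem \ref{thm2} still hold, then invoke Theorem \ref{thm2}(ii). The paper's version is slightly slicker: it rescales both metrics, taking $\bar g_1 = k_1 g_1$ and $\bar g_2 = k_2 g_2$, which immediately gives $K_{N_1(\bar g_1)}\geq 1$, $K_{N_2(\bar g_2)}\leq 1$, and $|\wedge^2 df| < \frac{k_1}{k_2}\cdot\frac{k_2}{k_1}=1$, avoiding the need to introduce the constant $L$ and select $c$; your one-sided rescaling with $c=k_1/k_2$ amounts to the same thing.
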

\begin{proof}
Consider the metrics $\bar{g}_1=k_1g_1$ and $\bar{g}_2=k_2g_2$.
Then the sectional curvatures satisfy $K_{N_1(\bar{g}_1)}\geq 1,
K_{N_2(\bar{g}_2)}\leq 1$, and the map
$f:(N_1,\bar{g}_1)\to(N_2,\bar{g}_2)$ satisfies
$|\wedge^2\!df|<\frac{k_1}{k_2}\cdot\frac{k_2}{k_1}=1$, which is
an area-decreasing mapping. By Theorem \ref{thm2}, $f$ is
homotopic to a constant map.
\end{proof}

 Assume
$(N_1,g_1)$ has nonnegative Ricci curvature and $\mbox{dim
}N_1=2$. A classical result in harmonic theory tells us that there
exists $\varepsilon>0$ such that if a harmonic map $f:(N_1,g_1)\to
(N_2,g_2)$ satisfies $E(f)=\int_{N_1}\|df\|^2<\varepsilon$, then
$f$ is a constant map. As a final application of Theorem
\ref{thm2}, one can prove a similar result. The idea is first to
obtain the pointwise bound of $df$ by the total energy. Then apply
Corollary \ref{App} to conclude that $f$ is homotopic to a
constant map when $K(g_1)>0$. Such a pointwise estimate is
obtained by Schoen \cite{S} when $\mbox{dim }N_1=2$, $f$ is
harmonic, and the energy is sufficiently small in small balls. We
remark that this argument works in higher dimension whenever the
pointwise estimate is obtained.

\end{document}